\newtheorem{theorem}{Theorem}
\newtheorem{prop}[theorem]{Proposition}
\theoremstyle{definition}
\newtheorem*{constra}{Construction A}
\newcommand\C{\mathbb{C}}
\newcommand\F{\mathbb{F}}
\newcommand\R{\mathbb{R}}
\newcommand\Z{\mathbb{Z}}
\newcommand\wt{\mathrm{wt}}
\newcommand\inprod[2]{\langle#1,#2\rangle}
\newcommand\latinprod[2]{(#1,#2)}
\title[Type~II Codes of Length~$24$]{On the
Classification of Type~II Codes of Length~$24$}
\author[N.\ D.\ Elkies]{Noam D.\ Elkies}
\address{Department of Mathematics, Harvard University\newline\indent
One Oxford Street\newline \indent Cambridge, MA 02138}
\email{elkies@math.harvard.edu}
\author[S.\ D.\ Kominers]{Scott D.\ Kominers}
\address{Department of Mathematics, Harvard University\newline\indent c/o
8520 Burning Tree Road\newline \indent Bethesda, MD 20817}
\email{kominers@fas.harvard.edu, skominers@gmail.com}
\subjclass[2000]{94B05; 11H71}
\keywords{Type~II codes, harmonic weight enumerators, tetrad systems}
\begin{document}
\begin{abstract}
We give a new, purely coding-theoretic proof of Koch's criterion
on the tetrad systems of Type~II codes of length $24$ using
the theory of harmonic weight enumerators.  This approach
is inspired by Venkov's approach to the classification of the
root systems of Type~II lattices in $\R^{24}$, and gives a new
instance of the analogy between lattices and codes.
\end{abstract}
\maketitle
\section{Background}\label{intro}

We denote by $\F_2$ the two-element field $\Z/2\Z$.
By a ``code'' we mean a \emph{binary linear code of length $n$},
that is, a linear subspace of $\F_2^{n}$.
For such a code~$C$, and any integer~$w$, we define
$$
C_w := \{c \in C: \wt(c)=w \},
$$
where $\wt(c) := |\{ i: c_i = 1 \}|$ is the \emph{Hamming weight}.
Recall that the \emph{dual code} of $C$, denoted $C^\bot$, is defined by
$$
C^\bot :=\{c'\in \F_2^n: \inprod{c}{c'}=0 \text{ for all } c \in C\},
$$
where $\inprod{\cdot}{\cdot}$ is the usual bilinear pairing
$\inprod{x}{y} = \sum_{i=1}^n x_i y_i$ on $\F_2^n$.
We have $\dim(C) + \dim(C^\bot) = n$.
A code $C$\/ is said to be \emph{self-dual} if $C=C^\bot$.
Such a code must have $\dim(C) = n/2$; in particular $2 \mid n$.
Because $\inprod{c}{c} \equiv \wt(c) \bmod 2$, it follows that a self-dual code
$C$\/ is \emph{even}: $2 \mid \wt(c)$ for every word $c \in C$\/;
equivalently, $C_w = \emptyset$ unless $2\mid w$.
A code $C$\/ is said to be \emph{doubly even} if $4 \mid \wt(c)$
for all $c \in C$\/; equivalently, if $C_w=\emptyset$ unless $4\mid w$.

A self-dual code is said to be of \emph{Type II}\/ if it is doubly even,
and of \emph{Type I}\/ otherwise.  Type~II codes are especially rare.
A Type~II code must have length $n=8n'$ for some integer~$n'$,
and for small values of $n'$ the Type~II codes
have been entirely classified.
Indeed, Pless \cite{Pless:GF2} classified all the self-dual codes
of length~$n$, both of Type~I and of Type~II, for $n\leq 20$;
Pless and Sloane \cite{PlessSloane1975}
extended this classification to lengths $n=22$ and~$24$,
citing unpublished work of Conway for the Type~II case;
and Conway and Pless \cite{ConwayPless1980,ConwayPless1992}
classified the Type~II codes of length $32$.
In particular, there is a unique Type~II code of length~$8$
(the $[8,4,4]$ extended Hamming code\footnote{
  Recall that an ``$[n,k,d]$ code'' is a code of length~$n$,
  dimension~$k$, and minimal (nonzero) distance~$d$.
  }), two Type~II codes of length~$16$, and nine of length~$24$.

For sufficiently large $n$, a complete classification of self-dual codes
is likely out of reach.  Indeed, Rains and Sloane
\cite{RainsSloane1998} remarked that ``length $32$ is probably a good
place to stop [seeking such classification results],'' using the
mass formula of MacWilliams, Sloane, and Thompson \cite{MacWilliamsEtAl:Good}
to compute that there are at least $17493$ Type~II codes of length $40$.
King~\cite{King01} further showed that at least $12579$ of these
have $C_4 = \emptyset$, suggesting that it may even be unreasonable
to ask for a classification of \emph{extremal}\footnote{
  Mallows and Sloane~\cite{MallowsSloane} showed that a Type~II code~$C$\/
  of length~$n$ must contain nonzero words of weight at most
  $4\lfloor n/24 \rfloor + 4$ (see also~\cite[p.~194]{SPLAG}).
  If $C_w = \emptyset$ for all positive $w < 4\lfloor n/24 \rfloor + 4$,
  then $C$\/ is said to be ``extremal'': it has the largest minimal
  distance among all Type~II codes of its length.
  }
Type~II codes of length~$40$.

Binary codes~$C$\/ are related with certain \emph{lattices}
$L_C$ in $\R^n$, via the following construction originally due to
Leech and Sloane~\cite{LeechSloane}:
\begin{constra}[{\cite[pp.~182--183]{SPLAG}}]For a code $C\subset \F_{2}^n$,
the lattice $L_C\subset \R^n$ consists of all $x\in \R^n$
such that $2^{1/2} x \in \Z^n$ and $(2^{1/2} x) \bmod 2 \in C$.
\end{constra}
The lattice $L_{C^\bot}$ associated to $C^\bot$
is the dual of $L_C$; that is, it consists of all $x' \in \R^n$
such that $\latinprod{x}{x'} \in \Z$\/ for all $x \in L_C$.\footnote{Here 
$\latinprod{\cdot}{\cdot}$ denotes the standard inner product on $\R^n$.}
In particular, $L_C$ is self-dual if and only if $C$\/ is.
A lattice $L$ for which $\latinprod{x}{x} \in 2\Z$ for all $x \in L$
is said to be \emph{even}; a self-dual lattice is said to be
of \emph{Type~II}\/ if it is even and of \emph{Type~I}\/ if not.
Thus a Construction~A lattice $L_C$ is even if and only if
$C$\/ is doubly even, and if $C$\/ is a Type~I (resp.\ Type~II) code
then $L_C$ is a Type~I (resp.\ Type~II) lattice.
As with codes in $\F_2^n$, Type~II lattices in~$\R^n$ exist
if and only if $8 \mid n$ (see~\cite[p.~53 (Cor.~2)]{Serre:course} and \cite[p.~109]{Serre:course}).  For $n=8$ and $n=16$, Witt~\cite{Witt:8-16} proved
 that the only Type~II lattices are those
of the form $L_C$ for one of the Type~II codes of the same length (see also~\cite[p.~48]{SPLAG}).

Niemeier~\cite{Niemeier:24} was the first to classify the Type~II
lattices of rank~$24$.  There are $24$, including the $9$ lattices
$L_C$ where $C$\/ is one of the Type~II codes of length~$24$.
His technique was later greatly simplified by
Venkov~\cite{Venkov:24} (also in~\cite[Ch.~18]{SPLAG}), who used
weighted theta functions to constrain the possible
root systems of Type~II lattices.  Specifically, Venkov
showed that any rank-$24$ Type~II lattice has one of exactly
twenty-four root systems; the work of Niemeier~\cite{Niemeier:24}
furthermore shows that each of these root systems corresponds
to exactly one rank-$24$ Type~II lattice.\footnote{See 
Ebeling~\cite{Ebeling:lattices} for further discussion.}

Koch~\cite{Koch87} developed a theory of tetrad systems for
codes analogous to the theory of root systems for lattices.
He then obtained a condition on the tetrad systems
of Type~II codes of length $24$ through an appeal to
Venkov's results~\cite{Venkov:24}.  In particular, Koch~\cite{Koch87}
showed that any Type~II code of length $24$ has one of nine tetrad systems;
Conway's classification~\cite{PlessSloane1975} of such codes
furthermore implies that each of these nine arises
for a unique code.

In this paper, we give a new, purely coding-theoretic proof of the
Koch condition~\cite{Koch87} on tetrad systems of Type~II codes of length $24$.
Our method uses Bachoc's theory~\cite{Bachoc:binary,Bachoc:non-binary}
of harmonic weight enumerators,
a coding-theoretic analogue of weighted theta functions, which had
not been developed at the time of Koch's work.
This approach gives a new instance of the analogy between lattices and codes:
our method is directly analogous to that of Venkov~\cite{Venkov:24} for
the classification of the root systems of Type~II lattices in
$\R^{24}$.

The remainder of this paper is organized as follows.  Sections
\ref{tetradintro} and \ref{harmintro} respectively introduce relevant
results from the theories of tetrad systems and harmonic weight enumerators.
Section~\ref{mainthm} states and proves the Koch condition.

\subsection{Tetrad systems}\label{tetradintro}
For a doubly even code $C\subset \F_2^n$,
the set $C_4$ is called the \emph{tetrad system} of~$C$.
In analogy with the theory of root systems for lattices,
the code $\tau(C)$ generated by $C_4$ is called the
\emph{tetrad subcode} of $C$, and if $\tau(C)=C$ then $C$ is called a
\textit{tetrad code}. The irreducible tetrad codes are exactly
\begin{itemize}
  \item the codes $d_{2k}$ ($k \geq 2$), consisting of
   all words $c \in \F_2^{2k}$ of doubly even weight such that
   $c_{2j-1}=c_{2j}$ for each $j=1,2,\ldots,k$\/;
  \item the $[7,3,4]$ dual Hamming code, called $e_7$ in this context; and
  \item the $[8,4,4]$ extended Hamming code, here called $e_8$
\end{itemize}
(see~\cite{Koch87}).  We use the names $d_{2k}$, $e_7$, $e_8$
because the Construction~A lattices $L_{d_{2k}}$, $L_{e_7}$, and $L_{e_8}$
are isomorphic with the root lattices $D_{2k}$, $E_7$, and $E_8$ respectively.

Analogous to the Coxeter number of an irreducible root system,
we define the \emph{tetrad number} $\eta(C)$ of an irreducible
tetrad code~$C$\/ of length~$m$ to be $|C_4|/m$.
A quick computation shows that each of the $m$ coordinates of~$C$\/
takes the value~$1$ on exactly $4\eta(C)$ words in $C_4$, and that
$\eta(d_{2k}) = (k-1)/4$ for each~$k$,
while $\eta(e_7) = 1$ and $\eta(e_8) = 7/4$.

\subsection{Harmonic weight enumerators}\label{harmintro}

Delsarte~\cite{Delsarte:Hahn} introduced the theory of
discrete harmonic polynomials.
For any code $C \subseteq \F_2^n$
and any discrete harmonic polynomial $P: \F_2^n \rightarrow \C$,
the \textit{harmonic weight enumerator} $W_{C,P}(x,y)$ is defined by
\begin{equation}
W_{C,P}(x,y) = \sum_{c\in C} P(c) x^{n-\wt(c)} y^{\wt(c)}
  = \sum_{w=0}^n \left( \sum_{c\in C_w} P(c) \right) x^{n-w} y^{w}.
  \label{harmwtenumgen}
  \end{equation}
These generalized weight enumerators are analogous to the
weighted theta functions of lattice theory; they encode
the distributions of codewords in $C_w$ on the Hamming sphere of radius~$w$.

Bachoc~\cite{Bachoc:binary,Bachoc:non-binary} showed that harmonic weight enumerators
satisfy the following identity generalizing the MacWilliams identity
for Hamming weight enumerators:
$$
W_{C,P}(x,y) =
  \left(-xy\right)^{\deg P}
  \cdot
  \frac{2^{n/2}}{|C^\bot|}
  \cdot
  W_{C^\bot,P}\left(\frac{x+y}{\sqrt 2},\frac{x-y}{\sqrt 2}\right).
$$
Furthermore, by Bachoc~\cite[Cor.~2.1]{Bachoc:binary}, if $C$ is Type~II
and if $P$ is homogeneous of degree~$1$, then $W_{C,P}(x,y)$ is in a 
space of covariant homogeneous polynomials, called
$\mathcal{I}_{\mathcal{G}_1,\chi_1}$ in \hbox{\cite[Lem.~3.1]{Bachoc:binary}}. 
By the same lemma \cite[Lem.~3.1]{Bachoc:binary},
$\mathcal{I}_{\mathcal{G}_1,\chi_1}$ 
contains no nonzero polynomials of degree less than~$30$.

\section{The Koch criterion}\label{mainthm}

\begin{theorem}\label{tetradsystems}
If $C$ is a Type~II code of length $24$, then $C$ has
one of the following nine tetrad systems:
\begin{equation*}\emptyset,\quad
6d_4,\quad4d_6,\quad3d_8,\quad2d_{12},\quad
d_{24},\quad2e_7+d_{10},\quad 3e_8,\quad e_8+d_{16}.\end{equation*}
\end{theorem}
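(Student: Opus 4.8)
The plan is to transplant Venkov's argument for Type~II lattices into the coding setting, with harmonic weight enumerators playing the role of weighted theta functions. The one structural novelty is that I would work with harmonics of degree~$1$ rather than degree~$2$: codewords carry no sign symmetry, so already the first moment of the tetrad system is informative. First I would fix a degree-$1$ harmonic polynomial $P$ and observe that $W_{C,P}(x,y)$ is homogeneous of degree $n=24$ in $(x,y)$. By the result of Bachoc quoted in Section~\ref{harmintro}, for $C$ of Type~II and $\deg P = 1$ the enumerator $W_{C,P}$ lies in $\mathcal{I}_{\mathcal{G}_1,\chi_1}$, which contains no nonzero homogeneous polynomial of degree below~$30$. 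Since $24 < 30$, this forces $W_{C,P} \equiv 0$. Reading off the coefficient of $x^{24-w}y^{w}$ in~\eqref{harmwtenumgen}, I obtain $\sum_{c \in C_w} P(c) = 0$ for every $w$ and every degree-$1$ harmonic $P$; only the case $w=4$ will be used.

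The second step is to convert this vanishing into a \emph{balance} condition on the tetrad system. Among the degree-$1$ harmonic polynomials are the coordinate differences $P(c) = c_i - c_j$ (these correspond to weight functions $f$ on singletons with $\sum_i f(i) = 0$, hence evaluate to $\sum_i f(i)\,c_i$ on a word~$c$). Applying the vanishing to $P = c_i - c_j$ gives $\sum_{c \in C_4}(c_i - c_j) = n_i - n_j = 0$, where $n_i := |\{c \in C_4 : c_i = 1\}|$ counts the weight-$4$ words whose support contains coordinate~$i$. Thus $n_i$ is independent of~$i$.

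Third, I would read this balance through the irreducible decomposition of the tetrad subcode. Write $\tau(C) = \bigoplus_j C^{(j)}$ as a direct sum of irreducible tetrad codes $C^{(j)}$, of length $m_j$ and tetrad number $\eta_j$, supported on pairwise disjoint coordinate blocks; this is exactly the decomposition encoded by the notation $6d_4$, $2e_7 + d_{10}$, and so forth. Because each block $C^{(j)} \subseteq C$ is doubly even, every nonzero word of $\bigoplus_j C^{(j)}$ has weight a positive multiple of~$4$ on each block it meets, so a weight-$4$ word must be supported within a single block. Hence, for a coordinate $i$ lying in block~$j$, the computation of Section~\ref{tetradintro} gives $n_i = 4\eta_j$, while $n_i = 0$ for any coordinate outside $\tau(C)$. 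The balance condition then leaves only two possibilities: either $C_4 = \emptyset$, or $C_4 \neq \emptyset$, in which case no free coordinate can coexist with a block (its count $0$ would differ from the positive $4\eta_j$), so $\tau(C)$ meets all $24$ coordinates and all the $\eta_j$ are equal. I expect this to be the one genuinely delicate step: Bachoc's degree bound hands us the gap $24 < 30$ for free, and the real work is using double-evenness to confine each weight-$4$ word to a single block, thereby promoting the first-moment balance into the twin conclusions of full coordinate coverage and equal tetrad numbers.

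Finally I would enumerate. It remains to list the multisets of irreducible tetrad codes $d_{2k}, e_7, e_8$ whose lengths sum to~$24$ and whose tetrad numbers are all equal to a common value~$\eta$. Using $\eta(d_{2k}) = (k-1)/4$, $\eta(e_7) = 1$, and $\eta(e_8) = 7/4$, this is a short finite check: most admissible $\eta$ admit a unique all-$d$ solution, giving $6d_4$, $4d_6$, $3d_8$, $2d_{12}$, and $d_{24}$, while the two values $\eta = 1$ and $\eta = 7/4$ at which an $e$-type code shares the tetrad number with a $d$-type code yield $2e_7 + d_{10}$ and the pair $3e_8$, $e_8 + d_{16}$, respectively. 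Together with the case $C_4 = \emptyset$, these are exactly the nine tetrad systems claimed, completing the proof.
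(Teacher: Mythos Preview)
Your proposal is correct and follows essentially the same route as the paper: use Bachoc's result that $W_{C,P}$ vanishes for degree-$1$ harmonics when $n=24<30$, extract the $w=4$ coefficient to see that $n_i=|\{c\in C_4:c_i=1\}|$ is constant in~$i$, deduce that either $C_4=\emptyset$ or $\tau(C)$ covers all coordinates with every irreducible component having the same tetrad number, and then enumerate. The only cosmetic difference is that the paper uses the harmonics $P_i(c)=24c_i-\wt(c)$ (obtaining $n_i=|C_4|/6$ directly) where you use $c_i-c_j$ (obtaining $n_i=n_j$); these span the same space and yield the same conclusion.
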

This result follows from the classification~\cite{PlessSloane1975}
of Type~II codes of length~$24$, and also appears
in~\cite{Ebeling:lattices, Koch87}.  In the proof of Theorem
\ref{tetradsystems}, we will use the following proposition.

\begin{prop}\label{irredcomps}
Let $C$ be a Type~II code of length $24$.  Then, \begin{itemize}
\item either $C_4=\emptyset$ or for each $i \in \{1,2,\ldots,24\}$
there exists $c\in C_4$ such that $c_i = 1$.
\item each irreducible component of $\tau(C)$
has tetrad number equal to $|C_4|/24$.
\end{itemize}
\end{prop}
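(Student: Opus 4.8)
The plan is to run the coding-theoretic analogue of Venkov's vanishing argument. First I would record the key consequence of Bachoc's theory: since $C$ is Type~II of length $24$ and any homogeneous degree-$1$ discrete harmonic polynomial $P$ produces a weight enumerator $W_{C,P}(x,y)$ lying in $\mathcal{I}_{\mathcal{G}_1,\chi_1}$, while $W_{C,P}$ is by \eqref{harmwtenumgen} homogeneous of degree $24$ in $(x,y)$ and $\mathcal{I}_{\mathcal{G}_1,\chi_1}$ contains no nonzero polynomial of degree below $30$, we must have $W_{C,P}(x,y)=0$ identically for every such $P$. This is the exact parallel of the vanishing of the degree-$2$ weighted theta function of a rank-$24$ Type~II lattice, which is forced to be zero because it lives in the (trivial) space of weight-$14$ cusp forms.

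Next I would make this vanishing concrete. The homogeneous degree-$1$ harmonic polynomials are precisely the functions $P_\varphi(c)=\sum_{i=1}^{24}\varphi(i)\,c_i$ attached to weight vectors $\varphi\colon\{1,\dots,24\}\to\C$ with $\sum_i\varphi(i)=0$. Writing $N_{w,i}:=|\{c\in C_w : c_i=1\}|$ and reading off the coefficient of $x^{24-w}y^{w}$ in $W_{C,P_\varphi}$, the identity $W_{C,P_\varphi}=0$ becomes $\sum_i \varphi(i)\,N_{w,i}=0$ for every $w$ and every zero-sum $\varphi$. Hence, for each $w$, the vector $(N_{w,i})_i$ is orthogonal to the hyperplane $\{\sum_i\varphi(i)=0\}$, i.e.\ is a scalar multiple of the all-ones vector, so $N_{w,i}$ is independent of $i$. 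Counting incidences via $\sum_i N_{w,i}=w\,|C_w|$ then pins the common value to $N_{w,i}=w\,|C_w|/24$. Part (i) is now immediate by specializing to $w=4$: the count $N_{4,i}=|C_4|/6$ is the same for every coordinate~$i$ and is positive whenever $C_4\neq\emptyset$, so in that case each $i$ lies in the support of some $c\in C_4$.

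For part (ii) I would decompose $\tau(C)=\bigoplus_\alpha C^{(\alpha)}$ into irreducible tetrad codes supported on pairwise disjoint coordinate blocks $B_\alpha$; by part (i) these blocks exhaust $\{1,\dots,24\}$ when $C_4\neq\emptyset$ (the case $C_4=\emptyset$ being vacuous). Since each irreducible component has minimal weight~$4$, any word of $C_4\subseteq\tau(C)$ has support inside a single block, which gives the disjoint union $C_4=\bigsqcup_\alpha (C^{(\alpha)})_4$. Consequently, for $i\in B_\alpha$ we get $N_{4,i}=|\{c\in (C^{(\alpha)})_4 : c_i=1\}|=4\,\eta(C^{(\alpha)})$ from the coordinate count recorded in Section~\ref{tetradintro}. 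Comparing this with $N_{4,i}=|C_4|/6$ yields $\eta(C^{(\alpha)})=|C_4|/24$ for every component, as claimed.

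I expect the main subtlety to be exactly this final combinatorial step rather than the harmonic input: one must verify that $C_4$ splits cleanly across the irreducible components (so that $N_{4,i}$ detects only the component containing $i$), which relies on the minimal-weight-$4$ property of the codes $d_{2k}$, $e_7$, $e_8$, and one must invoke part (i) to be sure that every coordinate belongs to some component. The entire analytic content, by contrast, is dispatched wholesale by Bachoc's degree bound, which collapses the length-$24$ enumerators to zero.
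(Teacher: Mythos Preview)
Your proposal is correct and follows essentially the same route as the paper: both deduce from Bachoc's degree bound that every degree-$1$ harmonic weight enumerator of $C$ vanishes, extract the equidistribution identity $|\{c\in C_4:c_i=1\}|=|C_4|/6$, and then feed this into the coordinate count $4\eta(C^{(\alpha)})$ from Section~\ref{tetradintro}. The only cosmetic difference is that the paper works directly with the particular harmonics $P_i(c)=24c_i-\wt(c)$ rather than the whole zero-sum family $P_\varphi$, which lets it read off \eqref{tetradcond} in one line without the orthogonality-to-the-hyperplane step; your more explicit handling of why $C_4$ splits across irreducible components is a welcome expansion of a point the paper leaves implicit.
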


\begin{proof}
For $i=1,2,\ldots,24$ let $P_i$ be the discrete harmonic polynomial
of degree~$1$ given by $P_i(c) = 24 c_i-\wt(c)$, where ``$24 c_i$''
is the real number $0$ or~$24$ according as the $\F_2$ element $c_i$
is $0$ or~$1$.  As we saw at the end of Section~\ref{harmintro}, the 
harmonic weight enumerator $W_{C,P_i}(x,y)$ must vanish for each $i$.
Extracting the $x^{24-w} y^w$ coefficient from~\eqref{harmwtenumgen},
we deduce that
\begin{equation}
\label{sum=0w}
\sum_{c\in C_w}(24c_i-w)=0
\end{equation}
for each $i$ and $w$.
Taking $w = 4$ and reorganizing \eqref{sum=0w} gives
\begin{equation}\label{tetradcond}
\left| \{c \in C_4 : c_i = 1 \} \right| \, = \, |C_4| / 6.
\end{equation}
Thus either $C_4$ is empty or each $i$ is contained in the support
of some $c \in C_4$, and in the latter case
each irreducible component of $\tau(C)$
has tetrad number $\frac14 |C_4|/6 = |C_4| / 24$.
\end{proof}

For any irreducible tetrad code $C$,
the set $C_4$ is a ``$1$-design''\kern-.1ex.\footnote{
  See~\cite{CameronvanLint} or~\cite[p.~88]{SPLAG}
  for an explanation of this terminology.}
Our Proposition~\ref{irredcomps} shows that $C_4$ is also a $1$-design
whenever $C$ is a Type~II code of length $24$.
Theorem~\ref{tetradsystems} now follows:

\begin{proof}[Proof of Theorem~\ref{tetradsystems}]
For each $\eta\not\in\{1,7/4\}$, there is at most one tetrad system
with tetrad number $\eta$.  For each $\eta \in\{1,7/4\}$, there are
two tetrad systems with tetrad number $\eta$, namely $d_{10}$ and $e_7$
for  $\eta = 1$, and $d_{16}$ and $e_8$  for  $\eta = 7/4$.

By Lemma~\ref{irredcomps}, we see that if $C_4$ is nonempty then
either it consists of $m$ tetrad systems of type $d_{2k}$ for some $m$
and $k$ such that $m\cdot 2k = 24$, or it has one of the following forms:
\begin{itemize}
\item $\delta_{10} d_{10} + \varepsilon_7 e_7$,
 with $\varepsilon_7 > 0$ and $10 \delta_{10} + 7 \varepsilon_7 = 24$, or
\item $\delta_{16} d_{16} + \varepsilon_8 e_8$,
 with $\varepsilon_8 > 0$ and  $16 \delta_{16} + 8 \varepsilon_8 = 24$.
\end{itemize}
The list in Theorem~\ref{tetradsystems} then follows almost immediately.
\end{proof}

As we mentioned in Section~\ref{intro}, our approach is directly analogous
to that of Venkov~\cite{Venkov:24}.  We may now explain this analogy explicitly:
in our proof of Theorem~\ref{tetradsystems}, we use the combinatorial
$1$-design property of the tetrad system of $C$\/ in the same way that
Venkov~\cite{Venkov:24} uses the spherical $2$-design property of
the root system of a rank-$24$ Type~II lattice.

\section*{Acknowledgements}
The first author was partly supported by National Science Foundation
grant DMS-0501029.  The second author was partly supported by a
Harvard Mathematics Department Highbridge Fellowship and a Harvard
College Program for Research in Science and Engineering Fellowship.
The authors would like to thank David Hansen for his helpful
comments on an earlier draft of the manuscript.

\bibliographystyle{siam}
\bibliography{references}

\end{document}